\documentclass[11pt]{amsart}
\usepackage{graphicx}
\usepackage{tikz}
\usepackage{latexsym}
\usepackage{amsfonts,amsmath,amssymb}
\usepackage{url}
\newtheorem{theorem}{Theorem}[section]								
\newtheorem{lemma}[theorem]{Lemma}
\newtheorem{definition}[theorem]{Definition}
\newtheorem{corollary}[theorem]{Corollary}
\newtheorem{example}[theorem]{Example}

\newtheorem{question}[theorem]{Question}

\usepackage{color}

\def\part{\partial}

\def\b1{\bold 1}

\newcommand{\beq}{\begin{equation}}
\newcommand{\eeq}{\end{equation}}

\theoremstyle{remark}

\numberwithin{equation}{section}
\date{\today}

\begin{document}

\title[A Bijective Proof of an Unbalanced Wilf-Equivalence]{A Bijective Proof of an Unbalanced Wilf-Equivalence}
\author {Jensen Bridges and Michael Waite}
\address{Department of Mathematics, University of Florida, Gainesville, FL 32601}
\email{jensenbridges@ufl.edu, michael.waite@ufl.edu}

\begin{abstract} 
We find a bijection to prove that the set of patterns $\{3412, 4321\}$ is Wilf-equivalent to the set of patterns $\{3412,4231,45321,54312\}$.
\end{abstract}

\maketitle

\section{Introduction}

Let $Av_n(q)$ be the set of permutations avoiding the pattern $q$, and for a set of patterns $S$ let $Av_n(S)$ be the set of permutations avoiding all of the patterns in $S$. We say that two sets of patterns $S$ and $S'$ are \textit{Wilf-equivalent} if $|Av_n(S)| = |Av_n(S')|$ for all $n$. We call a Wilf-equivalence \textit{unbalanced} if $S$ and $S'$ are each different sizes.

Much is known about balanced Wilf-equivalences, but less has been said about unbalanced Wilf-equivalences. The first unbalanced Wilf-equivalences between finite sets of patterns were found in \cite{BursteinPantone} and \cite{BloomBurstein}, and more have recently been found in \cite{Defant} and in \cite{BeanNadeauUlfarsson}. Most of these results have been found by using the structure of the permutations in a set $Av_n(S)$ to find an exact enumeration, and comparing this enumeration to the enumeration of another set $Av_n(S')$. Notably, recently in \cite{Nick} some more unbalanced Wilf-equivalences have been found not by direct enumeration but through constructing a bijection between the classes.

In this paper, we find a direct bijection between $Av_n(3412,4321)$ and $Av_n(3412,4231,45321,54312)$. The former class of permutations was directly enumerated in \cite{KremerShiu}.  The latter class of permutations has recently been directly enumerated by Richmond \cite{Richmond}, by considering these permutations as a restriction of smooth permutations and analyzing their structure using staircase diagrams. 

To construct this bijection, we will construct an injection in the forward direction in Section 3 and an injection in the backwards direction in Section $4$. The large part of the work is in proving Lemmas $3.2$ and $4.2$, which give the technical conditions we need to show that the maps in Theorems $3.3$ and $4.3$ are the injections we want. We collect these results to give the main result as Corollary $5.1$.

\section{Preliminaries}

First, we recall some definitions.

\begin{definition}
    We say that a permutation $p$ \textbf{contains} a pattern $q = q_1 ... q_k$ if there is a subsequence $n_1, ..., n_k$ so that $p_{n_i} < p_{n_j}$ if and only if $q_i < q_j$. If $p$ does not contain $q$, then we say $p$ \textbf{avoids} $q$.
\end{definition}

\begin{example}
    The permutation $4213756$ contains a $123$ pattern in the entries $156$ but avoids $4231$.
\end{example}

\begin{definition}
    Say that a sequence of entries $p_{i_1}, ..., p_{i_n}$ of a permutation $p$ is \textbf{(lexicographically) left} of the sequence of entries $p_{j_1}, ..., p_{j_n}$ if $i_1...i_n$ is lexicographically before $j_1...j_n$ (i.e. $i_1< j_1$, or $i_1=j_1$ and $i_2 < j_2$, or $i_1 = j_1$ and $i_2=j_2$ and $i_3<j_3$, etc.). Similarly define \textbf{(lexicographically) right}.
\end{definition}

\begin{example}
    In the permutation $365421$, the entries $321$ are to the left of the entries $654$.
\end{example}

\section{The forwards map}

\begin{definition}
    Let $\phi$ be the map which takes a $4231$ pattern in a permutation with the leftmost $23$ entries and swaps the entries taking the role of $2$ and $3$.
\end{definition}

We prove the following properties hold for $\phi$.

\begin{lemma}
    Let $p$ be a permutation avoiding $3412$, $45321$, and $54312$, and which is such that the rightmost $32$ entries participating in a $4321$ pattern are to the left of the leftmost $23$ entries participating in a $4231$. Then the following are true:
    \begin{enumerate}
        \item $\phi(p)$ avoids $3412$.
        \item $\phi(p)$ sends the leftmost entries participating as the $23$ in a $4231$ pattern to the rightmost entries participating as the $32$ in a $4321$ pattern.
        \item The leftmost $23$ entries in a $4231$ pattern in $\phi(p)$ are to the right of the leftmost $23$ entries in a $4231$ pattern in $p$.
        \item $\phi(p)$ avoids $45321$ and $54312$.
    \end{enumerate}
\end{lemma}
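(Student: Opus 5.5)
We fix notation first. Write $p = \cdots d \cdots b \cdots c \cdots a \cdots$, where $b<c$ occupy positions $i<j$ and form the leftmost pair serving as the $23$ of a $4231$. Here $d$ is some entry left of $b$ with $d>c$, and $a$ is some entry right of $c$ with $a<b$. Then $\phi(p)$ is $p$ with $b$ and $c$ exchanged, so $c$ sits at position $i$ and $b$ at position $j$. Only the relative order of the pair $\{b,c\}$ changes. Consequently any occurrence of a pattern in $\phi(p)$ that is not already an occurrence in $p$ must use both positions $i$ and $j$. Every part of the lemma will be proved the same way: assume such a "new" occurrence exists and manufacture a forbidden configuration in $p$. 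That configuration is a $3412$, $45321$ or $54312$, a $4231$ whose $23$ lies lexicographically left of $(b,c)$, or a violation of the hypothesis that the rightmost $32$ of a $4321$ lies left of $(b,c)$.

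For (1), suppose $\phi(p)$ has a $3412$ using positions $i,j$. Since $c$ precedes $b$ there and $c>b$, they must play the roles $3,1$, $3,2$, $4,1$ or $4,2$. In each case I undo the swap and combine with $d$ or $a$. For example, if $c,b$ play $4,1$ of $x\,c\,b\,y$ (with $c>x>y>b$), then in $p$ the entries $x\,b\,c\,y$ together with $d$ or $a$ give either a $3412$ or a $4231$ whose $23$ lies strictly left of $(b,c)$. In the cases where $b$ and $c$ are not adjacent in the pattern, the entries between them should produce, together with $d$ and $a$, a $4231$ with an earlier $23$, or else a $45321$/$54312$.

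For (2), we must check two things. First, $d\,c\,b\,a$ is a $4321$ in $\phi(p)$, so $(c,b)$ does participate as a $32$. Second, no $32$ of a $4321$ in $\phi(p)$ lies to its right. Suppose $x\,y$ at positions $(k,l)$, lexicographically after $(i,j)$, are the $32$ of a $4321$ $w\,x\,y\,z$ in $\phi(p)$. If that occurrence avoids positions $i$ and $j$, it is a $4321$ of $p$ whose $32$ lies right of $(b,c)$, contradicting the hypothesis. Otherwise exactly one of $k,l$ equals $i$ or $j$, or the occurrence uses the swapped values as $4$ or $1$. In each subcase, translating back to $p$ yields either a $4321$ with $32$ to the right of the hypothesized rightmost one, or a $45321$ formed from $d$, $c$, $b$ and the extra entries.

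For (3), take any $4231$ in $\phi(p)$ with $23$ at positions $(k,l)$. If it avoids positions $i,j$, it is a $4231$ in $p$, so $(k,l)\ge(i,j)$. Equality is impossible because the values at $i,j$ are now decreasing. If it uses $i$ or $j$, I show $(k,l)$ is lexicographically greater than $(i,j)$, arguing by contradiction via a 4231 with a smaller $23$ in $p$, or via a $54312$. For (4), I argue as in (1): a new $45321$ or $54312$ in $\phi(p)$ uses $c$ before $b$ as some adjacent-in-value or descent pair, and reversing the swap yields a $3412$, a $4231$ left of $(b,c)$, or a $4321$ whose $32$ is right of $(b,c)$.

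I expect the main obstacle to be the case analysis in (2) and (4). There the configuration recovered in $p$ is not a forbidden pattern but only a violation of the ``leftmost/rightmost'' positional hypotheses, so careful lexicographic comparisons are needed. I will organize these cases by the pattern roles assigned to $c$ and $b$, and first try to reduce each case to producing a $4231$ with $23$ left of $(b,c)$.
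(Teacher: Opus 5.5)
Your argument rests on the claim in your first paragraph: since only the pair $\{b,c\}$ changes, any occurrence in $\phi(p)$ that is not already an occurrence in $p$ must use both positions $i$ and $j$. This claim is false. Swapping the values at positions $i<j$ also reverses the comparison between position $i$ (or $j$) and every entry whose value lies strictly between $b$ and $c$. It also moves $c$ (resp.\ $b$) across every entry whose position lies strictly between $i$ and $j$.

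What is true is weaker. An occurrence using exactly one of the two positions can be new, provided it also contains an entry with value in $(b,c)$ and an entry with position in $(i,j)$. Otherwise you may replace the moved entry by the original $b$ or $c$. Such occurrences really arise. Take $p=463251$:
\begin{itemize}
\item it avoids $3412$ and $54312$;
\item it satisfies the positional hypothesis, since the rightmost $32$ of a $4321$ sits at positions $(3,4)$ and the leftmost $23$ of a $4231$ at $(3,5)$.
\end{itemize}
Here $\phi(p)=465231$. Positions $\{1,2,4,5\}$ read $4625$ (a $2413$) in $p$ but $4623$ (a $3412$) in $\phi(p)$, and position $3$ is not used. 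The only hypothesis this $p$ violates is $45321$-avoidance, via $46321$.

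So the configurations with a single shifted entry are exactly where the forbidden patterns must be invoked, and your plan never handles them. In (1) and (4) you only analyse the roles of $c$ and $b$ when both occur. In (2) and (3) these subcases are only asserted, via a ``translation back to $p$'' that is precisely what fails for them.

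To close the gap, do what the paper does. First choose the $4231$ extremally: $b$ leftmost, then $c$, then $d$ and $a$ leftmost (your $d$ and $a$ are arbitrary). Next derive which cells of the grid cut out by $d,b,c,a$ must be empty, using this choice, $3412$-avoidance and the positional hypothesis. For example:
\begin{itemize}
\item no entry left of $d$ exceeds $c$;
\item no entry between $d$ and $b$ has value in $(a,c)$;
\item no entry between $b$ and $c$ has value in $(b,d)$;
\item no entry right of $c$ other than $a$ lies below $b$.
\end{itemize}
Then run the case analysis for occurrences containing one shifted entry, an entry in the value strip $(b,c)$ and an entry in the position strip $(i,j)$, alongside the two-entry cases.

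Separately, the outcomes you list for (4) (a $3412$, an earlier $4231$, a later $4321$) are not enough. Take $p=7652413$:
\begin{itemize}
\item it avoids $3412$ and $45321$;
\item it satisfies the positional hypothesis;
\item it has no $4231$ whose $23$ lies left of $(b,c)$ and no $4321$ whose $32$ lies right of it.
\end{itemize}
Yet $\phi(p)=7654213$ contains the new $54312$ given by $76423$. The contradiction there can only come from the $54312$ given by $76524$ in $p$.
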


\begin{proof}
\begin{figure}[h]
        \centering
        \begin{tikzpicture}[scale = 1.6, point/.style={circle,draw=black!100,fill=black!100,thick,
                     inner sep=0pt,minimum size=1mm},ghostpoint/.style={circle,draw=black!50,fill=black!50,thick,
                     inner sep=0pt,minimum size=1mm}
                          ]
                          
      \draw[draw = black!80!,fill=gray!20] (0,0) rectangle node[anchor= center]{} (1,1);
      \draw[draw = black!80!,fill=gray!20] (1,0) rectangle node[anchor= center]{} (2,1);
      \draw[draw = black!80!,fill=orange!20] (2,0) rectangle node[anchor= center]{} (3,1);
      \draw[draw = black!80!,fill=red!20] (3,0) rectangle node[anchor= center]{} (4,1);
      \draw[draw = black!80!,fill=red!20] (4,0) rectangle node[anchor= center]{} (5,1);

      \draw[draw = black!80!,fill=gray!20] (0,1) rectangle node[anchor= center]{} (1,2);
      \draw[draw = black!80!,fill=red!20] (1,1) rectangle node[anchor= center]{} (2,2);
      \draw[draw = black!80!,fill=orange!20] (2,1) rectangle node[anchor= center]{} (3,2);
      \draw[draw = black!80!,fill=red!20] (3,1) rectangle node[anchor= center]{} (4,2);
      \draw[draw = black!80!,fill=red!20] (4,1) rectangle node[anchor= center]{} (5,2);

      \draw[draw = black!80!,fill=blue!20] (0,1+1) rectangle node[anchor= center]{} (1,2+1);
      \draw[draw = black!80!,fill=red!20] (1,1+1) rectangle node[anchor= center]{} (2,2+1);
      \draw[draw = black!80!,fill=red!20] (2,1+1) rectangle node[anchor= center]{} (3,2+1);
      \draw[draw = black!80!,fill=blue!20] (3,1+1) rectangle node[anchor= center]{} (4,2+1);
      \draw[draw = black!80!,fill=blue!20] (4,1+1) rectangle node[anchor= center]{} (5,2+1);

      \draw[draw = black!80!,fill=red!20] (0,1+2) rectangle node[anchor= center]{} (1,2+2);
      \draw[draw = black!80!,fill=gray!20] (1,1+2) rectangle node[anchor= center]{} (2,2+2);
      \draw[draw = black!80!,fill=red!20] (2,1+2) rectangle node[anchor= center]{} (3,2+2);
      \draw[draw = black!80!,fill=gray!20] (3,1+2) rectangle node[anchor= center]{} (4,2+2);
      \draw[draw = black!80!,fill=gray!20] (4,1+2) rectangle node[anchor= center]{} (5,2+2);

      \draw[draw = black!80!,fill=red!20] (0,1+3) rectangle node[anchor= center]{} (1,2+3);
      \draw[draw = black!80!,fill=red!20] (1,1+3) rectangle node[anchor= center]{} (2,2+3);
      \draw[draw = black!80!,fill=orange!20] (2,1+3) rectangle node[anchor= center]{} (3,2+3);
      \draw[draw = black!80!,fill=gray!20] (3,1+3) rectangle node[anchor= center]{} (4,2+3);
      \draw[draw = black!80!,fill=gray!20] (4,1+3) rectangle node[anchor= center]{} (5,2+3);

      \node (pi) at (1,4) [point] [label=below left:$p_1$] {};
      \node (pj) at (2,2) [point] [label=below left:$p_2$] {};
      \node (pj) at (2,3) [ghostpoint] [label=below left:$p_3'$] {};
      \node (pi) at (3,3) [point] [label=below left:$p_3$] {};
      \node (pi) at (3,2) [ghostpoint] [label=below left:$p_2'$] {};
      \node (pj) at (4,1) [point] [label=below left:$p_4$] {};
      
      \node (E) at (-0.5,0.5) [] [label=center:\textbf{E}] {}; 
      \node (D) at (-0.5,1.5) [] [label=center:\textbf{D}] {};  
      \node (C) at (-0.5,2.5) [] [label=center:\textbf{C}] {};  
      \node (B) at (-0.5,3.5) [] [label=center:\textbf{B}] {};  
      \node (A) at (-0.5,4.5) [] [label=center:\textbf{A}] {};  

      \node (1) at (0.5,-0.5) [] [label=center:\textbf{1}] {}; 
      \node (2) at (1.5,-0.5) [] [label=center:\textbf{2}] {};  
      \node (3) at (2.5,-0.5) [] [label=center:\textbf{3}] {};  
      \node (4) at (3.5,-0.5) [] [label=center:\textbf{4}] {};  
      \node (5) at (4.5,-0.5) [] [label=center:\textbf{5}] {}; 

        \end{tikzpicture}
        \label{fig:placeholder}
    \end{figure}

    Let $p_1p_2p_3p_4$ be a $4231$ pattern in $p$ with entries chosen so that $p_2$ is the leftmost possible, then $p_3$ is the leftmost possible given this choice of $p_2$, then $p_1$ is the leftmost possible given this choice of $p_2$ and $p_3$, then $p_4$ is the leftmost possible given this choice of $p_1, p_2$, and $p_3$. The map $\phi$ will move $p_2$ to $p_2'$ and $p_3$ to $p_3'$. The red squares in the grid below indicate where there are no entries in $p$ due to the assumptions on $p$ above.

    Proof of 1:
    \\
    Suppose $\phi(p)$ did have a $3412$ pattern $q=q_1q_2q_3q_4$. This pattern must either include one of $p_2'$ and $p_3'$ and contain an entry in a blue box and an entry in an orange box, or it must contain both $p_2'$ and $p_3'$. This is because if $q$ has only one of $p_2'$ and $p_3'$ and has no entry in a blue box, then replacing that entry by $p_3$ or $p_2$ respectively would give a $3412$ pattern in $p$, which is a contradiction. Likewise if $q$ has only one of $p_2'$ and $p_3'$ and has no entry in an orange box, then replacing that entry by $p_2$ or $p_3$ respectively would give a $3412$ pattern in $p$, which is a contradiction.
    
    \begin{itemize}
        \item Case 1: Suppose first that the $3412$ pattern has only one of the shifted points. Suppose also that $q_1$ is in C1.
        \begin{itemize}
            \item If $p_3'$ is in the $3412$ pattern, then $p_3'$ must be $q_2$. Further, $q_3$ must be in D3 or E3 for an orange box to be included, and $q_4$ can be in D3, E3 (if $q_3$ is), C4, or C5. But this means that $q_1p_1q_3q_4$ is a $3412$ pattern in $p$ which is a contradiction. 
            \item If instead $p_2'$ is in the $3412$ pattern, then $p_2'$ must be either $q_3$ or $q_4$. If $p_2'$ is $q_3$, then $q_4$ must be in C4 or C5. In either case, $p$ then has a $3412$ pattern in $q_1p_1p_2q_4$. If $p_2'$ is instead $q_4$, then for an orange box to be occupied, $q_3$ must be in D3 or E3. If $q_3$ is in D3, then $p$ has a $45321$ pattern in $q_1p_1p_2q_3p_4$, and if $q_3$ is in E3, then $p$ has a $3412$ pattern in $q_1p_1q_3p_4$. 
        \end{itemize}
        \item Case 2: Suppose the $3412$ pattern again has only one of the shifted points but that $q_1$ is not in C1. 
        \begin{itemize}
            \item Suppose first $p_3'$ is in the pattern. In order for both an orange and a blue box to be occupied, we must have that $p_3'$ is $q_1$, $q_2$ is in A3, and $q_4$ must be in C4 or C5. However, if $q_4$ is in C4, then $p$ has a $45321$ pattern in $p_1q_2p_3q_4p_4$. If, on the other hand, $q_4$ is in C5, then $p$ has a $3412$ pattern in $p_1q_2p_4q_4$. 
            \item Now suppose $p_2'$ is in the pattern. Then because a blue box must be occupied and because an entry can only be below and to the right of $p_2'$ if it is in fact $p_4$, we must have that $p_2'$ is $q_3$ and $q_4$ is in C4 or C5. Further, for an orange box to be occupied, we must have that $q_2$ is in A3. If $q_4$ is in C4, then $p$ has a $45321$ pattern in $p_1q_2p_3q_4p_4$. If, on the other hand, $q_4$ is in C5, then $p$ has a $3412$ pattern in $p_1q_2p_4q_4$.
        \end{itemize}
        \item Case 3: Finally, suppose the $3412$ pattern contains both $p_2'$ and $p_3'$. We no longer require that both and orange and a blue box must be occupied by parts of the pattern $q_1q_2q_3q_4$. We split this into 4 cases depending on the parts of the pattern these points can act as.
        \begin{itemize}
            \item First, assume $p_3'$ is $q_1$ and $p_2'$ is $q_3$. Then $q_2$ must be in A3 and $q_4$ must be in C4 or C5.  If $q_4$ is in C4, then $p$ has a $45321$ pattern in $p_1q_2p_3q_4p_4$, and if $q_4$ is in C5, then $p$ has a $3412$ pattern in $p_1q_2p_4q_4$.
            \item Next, assume $p_3'$ is $q_1$ and $p_2'$ is $q_4$. Then $q_2$ must be in A3 and $q_3$ must be in D3 or E3. In either case, $p$ has a $3412$ pattern in $p_1q_2q_3p_3$.
            \item Now, assume $p_3'$ is $q_2$ and $p_2'$ is $q_3$. Then $q_1$ must be in C1 and $q_4$ must be in C4 or C5. In either case, $p$ then has a 3412 pattern in $q_1p_1p_2q_4$.
            \item Finally, assume $p_3'$ is $q_2$ and $p_2'$ is $q_4$. Then $q_1$ must be in C1 and $q_3$ must be in D3 or E3. If $q_3$ is in E3, then $p$ has a $3412$ pattern in $q_1p_1q_3p_4$, and if $q_3$ is in D3, then $p$ has a $45321$ pattern in $q_1p_1p_2q_3p_4$. 
        \end{itemize}
    \end{itemize}  

    \vspace{10pt}

    Proof of 2:
    \\
    Suppose $\phi(p)$ does have a $4321$ pattern $q=q_1q_2q_3q_4$ in which $q_2q_3$ is to the right of $p_3'p_2'$. Again, either $p_2'$ and $p_3'$ are in $q$, or exactly one of $p_2'$ and $p_3'$ is in $q$ and $q$ must contain an entry in a blue box and an entry in an orange box. 

    \begin{itemize}
        \item Case 1: Suppose first only one of the points $p_2'$, $p_3'$ is in $q$.
        \begin{itemize}
            \item Then because $q$ must be entirely decreasing, there is no way for $q$ to have an entry in an orange box and in a blue box. 
        \end{itemize}
        
        \item Case 2: Suppose both of $p_3'$ and $p_2'$ are in $q$. 
        \begin{itemize}
            \item Since there are no entries allowed in C3, then $p_3'$ and $p_2'$ must be consecutive entries of $q$. Further, since $q_2q_3$ is to the right of $p_3'p_2'$, then $p_3'$ and $p_2'$ must be $q_1$ and $q_2$. But there is only one point which can go below and to the right of $p_2'$, namely $p_4$, so there is no valid way to place $q_3$ and $q_4$, so we have a contradiction. 
        \end{itemize}
        
    \end{itemize}

    \vspace{10pt}
    
    Proof of 3: Suppose there is a $4231$ pattern $q=q_1q_2q_3q_4$ in $\phi(p)$ with $q_2q_3$ to the left of $p_2p_3$.
    \begin{itemize}
        \item Case 1: Suppose only one of $p_2'$ and $p_3'$ are in $q$. Then it must be the case that $q$ has an entry in C1 and an entry in A3 or both an entry in D3 or $E3$ and an entry in C4 or C5, as otherwise $\phi(p)$ will have a $3412$ pattern which is forbidden by $(1)$.
        \begin{itemize}
            \item In the former case, then $q_2$ must be in C1 and $q_3$ must be in A3 since $4231$ only has an increase in those positions, but then there is nowhere to place $q_1$ so this is a contradiction. 
            \item In the latter case, then $q_2$ must be in D3 or E3 and $q_3$ must be in C4 or C5. But then $q_2q_3$ is to the right of $p_2p_3$, which contradicts the assumption.
        \end{itemize}
        \item Case 2: If both $p_2'$ and $p_3'$ are in $q$, then in order for $q_2q_3$ to be to the left of $p_2p_3$, $p_3'$ must be $q_2$ or $q_3$. 
        \begin{itemize}
            \item If $p_3'$ is $q_2$, then $q_3$ must go in A3 and there is nowhere to place $q_1$.
            \item If $p_3'$ is $q_3$, then $q_2$ must go in C1 and again there is nowhere to place $q_1$.
        \end{itemize}
    \end{itemize}

    \vspace{10pt}
    
    Proof of 4:
    \\
    Suppose $\phi(p)$ did have a $45321$ pattern $q_1q_2q_3q_4q_5$. This pattern must include one of either $p_2'$ or $p_3'$, and must contain an entry in a blue box and an entry in an orange box, or it must contain both $p_2'$ and $p_3'$.

    \begin{itemize}
        \item Case 1: Suppose first that $q_1$ is in C1. By (1), there can be no entries in D3 or E3 since otherwise $\phi(p)$ would have a $3412$ pattern with $q_1$, $p_1$, this point, and $p_2'$.
        \begin{itemize}
            \item If $p_3'$ is in the pattern, then no orange box can be occupied. 
            \item If instead $p_2'$ is in the pattern, then it must act as $q_5$ with $q_2, q_3,q_4$ in C1 since $q_1$ is. Again, then no orange box can be occupied. 
        \end{itemize}
        \item Case 2: Suppose that the pattern still has exactly one of the shifted points but assume instead that $q_1$ is not in C1. 
        \begin{itemize}
            \item Suppose first that $p_3'$ is part of the $45321$ pattern. Then in order for both orange and blue boxes to contain pieces of the $45321$ pattern, $p_3'$ must be $q_1$. Further, A3 must contain $q_2$ and the blue boxes must contain $q_3,q_4,q_5$. But then replacing $p_3'$ by $p_1$ gives a $45321$ pattern in $p$, a contradiction.
            \item Now suppose instead that $p_2'$ is part of the $45321$ pattern. Then $p_2'$ must be $q_5$, and $q_1,q_2,q_3,q_4$ must be in B2 and/or A3. Thus, there are no parts of the pattern in a blue box, a contradiction. 
        \end{itemize}
        \item Case 3: Finally, assume both $p_2'$ and $p_3'$ are in the $45321$ pattern. Because no entries are allowed in C3 or anywhere below and to the right of $p_2'$ (except $p_4$), these points must act as $q_3$ and $q_4$ with $p_4$ as $q_5$, or $q_4$ and $q_5$.
        \begin{itemize}
            \item If $p_3'$ and $p_2'$ are $q_3$ and $q_4$ with $p_4$ as $q_5$, then $q_1$ and $q_2$ must both be in B2. Then $p$ has a $3412$ pattern in $q_1q_2p_2p_3$.
            \item If instead these points act as $q_4$ and $q_5$, then $q_1, q_2, q_3$ must all be in B2 and we have the same issue as above. 
        \end{itemize}
    \end{itemize}

    Now, suppose $\phi(p)$ has a $54312$ pattern $q_1q_2q_3q_4q_5$. Again, this pattern must contain one of $q_2'$ and $q_3'$ and an entry in an orange and a blue box, or both of the shifted points. 

    \begin{itemize}
        \item Case 1: Suppose first that $q_1$ is in C1 and exactly one of the shifted points is included. Then again because of (1), we can't have any entries in D3 or E3. Also, $p_3'$ can't be in the pattern, so $p_2'$ must be. Further, it must be $q_5$, otherwise there are no pieces in an orange box, and C1 must have $q_1,q_2,q_3$ in a 321 pattern. However, since D3 and E3 are blocked, there's nowhere to place $q_4$. 
        \item Case 2: Now suppose $q_1$ is not in C1 but we still have exactly one of the shifted points included.
        \begin{itemize}
            \item Suppose first $p_3'$ is in the $54312$ pattern. For a blue box to be populated by the pattern, $p_3'$ must be $q_3$. Then $p$ has a $54312$ pattern in $p_1q_1q_2q_4q_5$, a contradiction. 
            \item If instead $p_2'$ is in the pattern, then for a blue box to be populated, $p_2'$ must be $q_4$, with $q_1q_2q_3$ in B2 or A3 and $q_5$ in C4 or C5. In order for an orange box to be populated, it must be true that $q_1q_2q_3$ is in A3. If $q_5$ is in C4, then $p_1q_1p_3q_5p_4$ forms a $45321$ pattern in $p$, and if $q_5$ is in C5 then $q_1q_2q_3p_4q_5$ forms a $54312$ pattern in $p$. 
        \end{itemize}
        \item Case 3: Finally, suppose both $p_2'$ and $p_3'$ are in the $54312$ pattern. Then $p_3'$ must be $q_3$ and $p_2'$ can be $q_4$ or $q_5$. In both cases, $q_1$ and $q_2$ must be in B2 in a 21 pattern. Then $p$ has a $54312$ pattern in $p_1q_1q_2p_2p_3$.
    \end{itemize}

\end{proof}

Now, to construct the map that we want, we will apply $\phi$ iteratively to the class of permutations in question. The properties proven in the Lemma above will show that this new map is the injection we want.

\begin{theorem}
    There is an injection 
    \[
    \Phi:Av_n(3412,4321) \rightarrow Av_n(3412,4231,45321,54312)
    \]
\end{theorem}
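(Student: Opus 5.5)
The map $\Phi$ will be defined by iterating $\phi$: given $p\in Av_n(3412,4321)$, set $p^{(0)}=p$ and $p^{(k+1)}=\phi(p^{(k)})$ as long as $p^{(k)}$ contains a $4231$. Let $\Phi(p)$ be the first $p^{(k)}$ that avoids $4231$. The proof has three parts: the previous Lemma applies at every stage, the iteration terminates in the target class, and the map can be undone step by step.

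\emph{The Lemma applies at every step.} I will prove by induction that each $p^{(k)}$ avoids $3412$, $45321$ and $54312$. I will also show that when $p^{(k)}$ contains a $4231$, the rightmost $32$ entries of a $4321$ in $p^{(k)}$ lie (lexicographically) left of its leftmost $23$ entries of a $4231$.

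For $k=0$, $p$ avoids $4321$, so it also avoids $45321$ and $54312$, since both contain $4321$. The condition on $4321$ patterns is vacuous.

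For the inductive step, items (1) and (4) of the Lemma give that $p^{(k+1)}$ avoids $3412$, $45321$ and $54312$. By item (2), the rightmost $32$ entries in a $4321$ of $p^{(k+1)}$ are exactly the swapped entries $p_3'p_2'$. These occupy the positions of the leftmost $23$ of a $4231$ in $p^{(k)}$. By item (3), the leftmost $23$ of a $4231$ in $p^{(k+1)}$ lies strictly right of those positions. Hence the hypothesis is preserved.

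\emph{Termination.} By item (3), the leftmost $23$ position pair of a $4231$ strictly increases in lexicographic order at each step. There are finitely many pairs of positions, so the process stops. The output avoids $4231$ by construction and avoids $3412$, $45321$, $54312$ by the induction, so $\Phi(p)\in Av_n(3412,4231,45321,54312)$.

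\emph{Injectivity.} I will build a left inverse. Let $\psi$ be the map that finds the rightmost $32$ entries participating in a $4321$ and swaps them. Item (2) says that $\psi(p^{(k+1)})=p^{(k)}$. The point is that the swapped pair $p_3'p_2'$ is precisely that rightmost pair, and swapping it back restores $p^{(k)}$.

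Each $p^{(k)}$ with $k\ge 1$ contains a $4321$, namely $p_1p_3'p_2'p_4$. On the other hand, $p^{(0)}=p$ contains no $4321$. So from $\Phi(p)$ we recover $p$ by applying $\psi$ repeatedly until the result avoids $4321$. This procedure depends only on $\Phi(p)$, so $\Phi$ is injective.

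The main obstacle is making sure item (2) is strong enough to invert each step unambiguously. In particular, the pair produced by $\phi$ must be the rightmost $32$ in a $4321$ under the same lexicographic convention used to define $\phi$. I would state this compatibility explicitly when checking that $\psi\circ\phi$ is the identity on each $p^{(k)}$.
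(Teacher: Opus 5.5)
Your proposal is correct and follows the paper's proof. You iterate $\phi$, use items (1), (3), (4) of the Lemma to keep its hypotheses in force at each step, and invert by repeatedly swapping the rightmost $32$ of a $4321$; item (2) guarantees this undoes each step. The only differences are minor: you get termination from item (3) (the leftmost $23$ position pair strictly moves right) instead of from the swap making the permutation lexicographically larger, and you make explicit the base case and why the inversion stops exactly at $p$ (every $p^{(k)}$ with $k\ge 1$ contains $p_1p_3'p_2'p_4$), which the paper leaves implicit.
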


\begin{proof}
    Let $\Phi$ be the map given by applying $\phi$ iteratively to a permutation $p\in Av_n(3412,4321)$ until it avoids $4231$. Properties $1,3,$ and $4$ allow us to apply $\phi$ iteratively and maintain that properties $1,2,3,$ and $4$ are true in the image. Since $\phi$ will always turn a $4231$ pattern into a $4321$ pattern, then $\phi$ will always make a permutation strictly lexicographically larger, hence $\Phi$ must eventually terminate.

    Notice that $\Phi$ does map into $Av_n(3412,4231,45321,54312)$ by properties $1$ and $3$ of the above lemma.

    To see that $\Phi$ is an injection, note that we can start with $\Phi(p)$ and take the rightmost $4321$ pattern and swap the $3$ and the $2$ entry, and do the same with the new rightmost $4321$ pattern, and repeat until the permutation avoids $4321$. By property $2$ of the above lemma, this will give us back $p$. 
\end{proof}

\section{The backwards map}

\begin{definition}
    Let $\psi$ be the map which takes a copy of $4321$ in a permutation with the rightmost $32$ entries  and swaps the entries taking the role of $3$ and $2$.
\end{definition}

We prove the following properties hold for $\psi$.

\begin{lemma}
    Let $p$ be a permutation avoiding $3412$, $45321$, and $54312$, and which is such that the rightmost $32$ entries participating in a $4321$ pattern are  to the left of the leftmost $23$ entries participating in a $4231$ pattern. Then the following are true:
    \begin{enumerate}
        \item $\psi(p)$ avoids $3412$.
        \item $\psi(p)$ sends the rightmost entries participating as the $32$ in a $4321$ pattern to the leftmost entries participating as the $23$ in a $4231$ pattern.
        \item The rightmost $32$ entries in a $4321$ pattern in $\psi(p)$ are to the left of the rightmost $32$ entries in a $4321$ pattern in $p$.
        \item $\psi(p)$ avoids $45321$ and $54312$.
    \end{enumerate}
\end{lemma}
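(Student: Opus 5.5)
Our plan mirrors the proof of Lemma 3.2, with the roles of $4231$ and $4321$ exchanged. Let $p_1p_2p_3p_4$ be a $4321$ pattern in $p$ chosen so that $p_3$ is rightmost possible, then $p_2$ rightmost given $p_3$, then $p_4$ and $p_1$ chosen extremally (for instance $p_1$ lowest above $p_2$ and $p_4$ highest below $p_3$). The map $\psi$ moves $p_2$ to $p_2'$ (in $p_3$'s column) and $p_3$ to $p_3'$ (in $p_2$'s column). We would draw the same $5\times 5$ grid cut out by the four points and first list the forbidden (``red'') cells in $p$. These come from three sources. One is the extremal choice of $p_2p_3$: a point below and right of $p_3$ other than the ones giving $p_4$ would contradict rightmost choice. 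Another is the hypothesis that the rightmost $32$ of a $4321$ lies left of the leftmost $23$ of a $4231$; this kills cells that would create a $4231$ whose $23$ sits left of or overlaps $p_2p_3$. The last is avoidance of $3412$, $45321$, $54312$. As before, we would mark blue and orange cells. These are the cells whose occupancy distinguishes a pattern using $p_2'$ or $p_3'$ from the same pattern using $p_3$ or $p_2$ respectively.

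With the grid in hand, the key reduction is the same as in Lemma 3.2. Any new occurrence of a pattern in $\psi(p)$ must use both shifted points, or use exactly one and meet both a blue and an orange cell. Otherwise substituting the unshifted point gives the same pattern in $p$. Properties (1) and (4) are then case analyses on which letters $p_2'$ and $p_3'$ play and where $q_1$ lies. In each case we exhibit a $3412$, $45321$, or $54312$ in $p$ built from $p_1,\dots,p_4$ and the remaining $q_i$, or a $4321$ further right than $p_2p_3$, contradicting the choice. For (4) we may use (1), as in the earlier proof, to block extra cells.

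For (2), we would suppose $\psi(p)$ has a $4231$ whose $23$ is left of $p_2'p_3'$. Since $p_1$ and $p_4$ remain, $p_1p_2'p_3'p_4$ is a $4231$. Using one shifted point forces increasing entries in a blue and an orange cell positioned incompatibly, or else produces a $4231$ in $p$ left of $p_2p_3$, which the hypothesis forbids. Using both shifted points, the empty middle cell forces them to be consecutive, so they are the $23$ themselves. For (3), we would suppose a $4321$ in $\psi(p)$ with $32$ not left of $p_2p_3$. If it avoids the shifted points, it lives in $p$ and contradicts the rightmost choice. If it uses them, a decreasing pattern cannot touch both the orange and blue cells, and $p_2'p_3'$ is increasing, so using both is impossible as $32$. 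The remaining placements lead back to a $4321$ in $p$ further right, or a $3412$ via (1).

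The main obstacle is the first step: correctly determining the empty cells around a rightmost $4321$. This is less symmetric than the $4231$ case, since ``rightmost $32$'' constrains the region right of $p_3$ differently from how ``leftmost $23$'' constrained the region left of $p_2$. The tie-breaking choices for $p_1,p_4$ must be tuned so that the blue/orange reduction still holds. We would test the cell list on small examples before running the casework for (1) and (4).
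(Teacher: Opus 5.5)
Your outline has the paper's architecture: a grid around the chosen $4321$, red cells, and the blue/orange substitution reduction. But it postpones exactly the parts that make up the proof, and the one concrete geometric claim you make in (3) is false here; (2) relies on the same unproved exclusion.

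\textbf{The missing idea.} In your labelling $p_2$ is the $3$ and $p_3$ the $2$. In the grid of Lemma 3.2, the red cells here are A2, B1, B2, B3, C2, C3, C4, D2, D4, D5, E4, E5. So the blue cells are C1, C5 and the orange cells are A3, D3, E3.

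Since C1 lies above-left of D3, E3, and A3 lies above-left of C5, a decreasing pattern with one moved entry \emph{can} meet both colours. For example, an entry of A1 above some $y\in$ A3, followed by $y$, $p_2'$ and some $x\in$ C5, would be a $4321$ in $\psi(p)$ whose $32$ lies right of $p_2p_3$.

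Two $3412$'s in $p$, which your proposal never uses, exclude these configurations:
\begin{itemize}
\item $x\in$ C1 together with $y\in$ D3$\cup$E3 gives $x\,p_1\,y\,p_3$;
\item $y\in$ A3 together with $x\in$ C5 gives $p_2\,y\,p_3\,x$.
\end{itemize}
Hence a new occurrence using one moved entry must pair C1 with A3, or D3/E3 with C5, and both pairs are ascents.

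The paper then observes that in each of $3412,4231,4321,45321,54312$, every ascent joins entries adjacent in both position and value. But $p_3'$ lies strictly between such a pair in position (C1, A3) or in value (D3/E3, C5), and $p_2'$ the other way round. So every new occurrence contains both moved entries, and they form an ascent in it.

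This gives (2) and (3) at once and reduces (1) and (4) to locating that ascent:
\begin{itemize}
\item As the $34$ of $3412$ or the $45$ of $45321$, it needs at least two entries below and to the right of both moved entries, where only $p_4$ lives.
\item As the $12$ of $3412$, the $34$ sits in A1, and $q_1q_2p_2p_3p_4$ is a $45321$ in $p$.
\end{itemize}

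\textbf{The $54312$ half of (4) is false.} If the ascent is the $12$ of a $54312$, the $543$ lies in A1$\cup\{p_1\}$. Together with $p_2p_3p_4$ it forms only a $54321$, which is not forbidden. The paper's proof calls this a $45321$, overlooking that $q_1>q_2$.

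Counterexamples:
\begin{itemize}
\item $p=654321$ avoids $3412,45321,54312$ and has no $4231$, so the hypothesis holds vacuously. This is exactly how Theorem 4.3 applies the lemma. Yet $\psi(p)=654231$ contains $65423$, a $54312$.
\item $p=654329781$ is a non-vacuous instance: its only $4231$ is $9781$. Again $\psi$ swaps $3,2$ and creates $65423$.
\end{itemize}
So your plan to exhibit a forbidden pattern in $p$ in every case must break here. The statement, and the iteration built on it, need repair.

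\textbf{A smaller point.} $\psi$ is defined via the lexicographically rightmost $32$, i.e.\ the $3$ is chosen rightmost first. Choosing the $2$ first, as you (and the paper) do, can pick a different pair: in $653241$, which satisfies the hypotheses, it picks $5,4$ instead of $3,2$. The red cells are the same under the correct choice.
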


\begin{proof}
    Let $p_1p_3p_2p_4$ be the $4321$ pattern in $p$ with entries chosen so that $p_2$ is rightmost, then $p_3$ is rightmost given this choice of $p_2$, then $p_1$ is rightmost given this choice of $p_2$ and $p_3$, and then $p_4$ is rightmost given this choice of $p_1,p_2$, and $p_3$. The map $\psi$ will move $p_2$ to $p_2'$ and $p_3$ to $p_3'$. The red squares in the following figure indicate where it is impossible for there to be any entry in $p$ without contradicting one of the assumptions.
    \begin{figure}[h]
        \centering
        \begin{tikzpicture}[scale = 1.6, point/.style={circle,draw=black!100,fill=black!100,thick,
                     inner sep=0pt,minimum size=1mm},ghostpoint/.style={circle,draw=black!50,fill=black!50,thick,
                     inner sep=0pt,minimum size=1mm}
                          ]
                          
      \draw[draw = black!80!,fill=gray!20] (0,0) rectangle node[anchor= center]{} (1,1);
      \draw[draw = black!80!,fill=gray!20] (1,0) rectangle node[anchor= center]{} (2,1);
      \draw[draw = black!80!,fill=orange!20] (2,0) rectangle node[anchor= center]{} (3,1);
      \draw[draw = black!80!,fill=red!20] (3,0) rectangle node[anchor= center]{} (4,1);
      \draw[draw = black!80!,fill=red!20] (4,0) rectangle node[anchor= center]{} (5,1);

      \draw[draw = black!80!,fill=gray!20] (0,1) rectangle node[anchor= center]{} (1,2);
      \draw[draw = black!80!,fill=red!20] (1,1) rectangle node[anchor= center]{} (2,2);
      \draw[draw = black!80!,fill=orange!20] (2,1) rectangle node[anchor= center]{} (3,2);
      \draw[draw = black!80!,fill=red!20] (3,1) rectangle node[anchor= center]{} (4,2);
      \draw[draw = black!80!,fill=red!20] (4,1) rectangle node[anchor= center]{} (5,2);

      \draw[draw = black!80!,fill=blue!20] (0,1+1) rectangle node[anchor= center]{} (1,2+1);
      \draw[draw = black!80!,fill=red!20] (1,1+1) rectangle node[anchor= center]{} (2,2+1);
      \draw[draw = black!80!,fill=red!20] (2,1+1) rectangle node[anchor= center]{} (3,2+1);
      \draw[draw = black!80!,fill=red!20] (3,1+1) rectangle node[anchor= center]{} (4,2+1);
      \draw[draw = black!80!,fill=blue!20] (4,1+1) rectangle node[anchor= center]{} (5,2+1);

      \draw[draw = black!80!,fill=red!20] (0,1+2) rectangle node[anchor= center]{} (1,2+2);
      \draw[draw = black!80!,fill=red!20] (1,1+2) rectangle node[anchor= center]{} (2,2+2);
      \draw[draw = black!80!,fill=red!20] (2,1+2) rectangle node[anchor= center]{} (3,2+2);
      \draw[draw = black!80!,fill=gray!20] (3,1+2) rectangle node[anchor= center]{} (4,2+2);
      \draw[draw = black!80!,fill=gray!20] (4,1+2) rectangle node[anchor= center]{} (5,2+2);

      \draw[draw = black!80!,fill=gray!20] (0,1+3) rectangle node[anchor= center]{} (1,2+3);
      \draw[draw = black!80!,fill=red!20] (1,1+3) rectangle node[anchor= center]{} (2,2+3);
      \draw[draw = black!80!,fill=orange!20] (2,1+3) rectangle node[anchor= center]{} (3,2+3);
      \draw[draw = black!80!,fill=gray!20] (3,1+3) rectangle node[anchor= center]{} (4,2+3);
      \draw[draw = black!80!,fill=gray!20] (4,1+3) rectangle node[anchor= center]{} (5,2+3);

      \node (pi) at (1,4) [point] [label=below left:$p_1$] {};
      \node (pj) at (2,2) [ghostpoint] [label=below left:$p_2'$] {};
      \node (pj) at (2,3) [point] [label=below left:$p_3$] {};
      \node (pi) at (3,3) [ghostpoint] [label=below left:$p_3'$] {};
      \node (pi) at (3,2) [point] [label=below left:$p_2$] {};
      \node (pj) at (4,1) [point] [label=below left:$p_4$] {};
      
      \node (E) at (-0.5,0.5) [] [label=center:\textbf{E}] {}; 
      \node (D) at (-0.5,1.5) [] [label=center:\textbf{D}] {};  
      \node (C) at (-0.5,2.5) [] [label=center:\textbf{C}] {};  
      \node (B) at (-0.5,3.5) [] [label=center:\textbf{B}] {};  
      \node (A) at (-0.5,4.5) [] [label=center:\textbf{A}] {};  

      \node (1) at (0.5,-0.5) [] [label=center:\textbf{1}] {}; 
      \node (2) at (1.5,-0.5) [] [label=center:\textbf{2}] {};  
      \node (3) at (2.5,-0.5) [] [label=center:\textbf{3}] {};  
      \node (4) at (3.5,-0.5) [] [label=center:\textbf{4}] {};  
      \node (5) at (4.5,-0.5) [] [label=center:\textbf{5}] {}; 

        \end{tikzpicture}
        \label{fig:placeholder}
    \end{figure}
    \\
    Similarly to the proof of Lemma 2.2, note that for a pattern $q$ to be in $\psi(p)$ but not in $p$, then $q$ must either have one of $p_2'$ and $p_3'$ and an entry in an orange box and an entry in a blue box, or must have both of $p_2'$ and $p_3'$. 
    
    However, before beginning to prove each piece, we make the following observations.
    \begin{itemize}
        \item Suppose this (so far unspecified) pattern $q$ contains only one of $p_2'$ and $p_3'$. Suppose first that $q$ has an entry in C1. Then notice that it may not have an entry in D3 or E3 since this would create a $3412$ pattern in $p$ with the existing $p_1$ and $p_2$. So, the only orange box available is A3.
        \item Suppose $q$ again only has one of the shifted points, but now suppose it contains an entry in C5. Then it may not have an entry in A3 since this would create a $3412$ pattern in $p$ with $p_3$ and $p_2$. Thus, the only blue boxes available are D3 and E3. 
        \item From the two items above, we need only to check the cases in which $q$ has an entry in both C1 and A3 or an entry in D3 or E3 and one in C5.  
        \item Now suppose the only increases in this pattern $q$ are consecutive in position and value. Then in both of the above cases, it is impossible for $q$ to contain either $p_2'$ or $p_3'$: 
        \begin{itemize}
            \item Indeed, if $p_2'$ is in $q$, then the increase from C1 to A3 is not consecutive in position, and the increase from D3 or E3 to C5 is not consecutive in value.
            \item If instead $p_3'$ is in $q$, then the increase from C1 to A3 is not consecutive in value, and the increase from D3 or E3 to C5 is not consecutive in position.
        \end{itemize} 
        So for such $q$, it must be the case that $q$ contains both $p_2'$ and $p_3'$ and thus need not have an entry both in a blue box and in an orange box.
        \item Finally, note that each of the patterns $\{3412, 4231, 4321, 45321, 54312\}$ fall into the category described above. Thus, we may use the preceding observation on each part of the proof.
    \end{itemize}

    Proof of 1:
    Suppose $\psi(p)$ does have a $3412$ pattern $q= q_1q_2q_3q_4$.

    Then $q$ must contain both of $p_2'$ and $p_3'$. If $p_2'$ and $p_3'$ are $q_1$ and $q_2$, then we have a contradiction, since there is only one possible entry to the right of $p_3'$ and smaller than $p_2'$. If instead $p_2'$ and $p_3'$ are $q_3$ and $q_4$, then $q_1$ and $q_2$ must be in A1 or one of them may be $p_1$. In any of these cases, then $q_1q_2p_3p_2p_4$ forms a $45321$ pattern in $p$, which is a contradiction.
    
    \vspace{10pt}

    Proof of 2:
    Suppose $\psi(p)$ does have a $4231$ pattern $q = q_1q_2q_3q_4$ with $q_2q_3$ to the left of $p_2'p_3'$.

    Again, $q$ must contain both of $p_2'$ and $p_3'$. But the only way for $q$ to contain $p_2'$ and $p_3'$ is as $q_2q_3$ (since $4231$ has only one increase). This contradicts that $q_2q_3$ is to the left of $p_2'p_3'$.

    \vspace{10pt}

    Proof of 3:
    Suppose $\psi(p)$ does have a $4321$ pattern $q=q_1q_2q_3q_4$ with $q_2q_3$ to the right of $p_2'p_3'$. 

    Then $q$ must contain both of $p_2'$ and $p_3'$. But this is impossible, since, $q$ is strictly decreasing, so we have a contradiction.
    
    \vspace{10pt}

    Proof of 4:
    Suppose $\psi(p)$ does have a $45321$ pattern $q = q_1q_2q_3q_4q_5$.

    We know $q$ must contain both of $p_2'$ and $p_3'$. The only way for $q$ to contain $p_2'$ and $p_3'$ is as $q_1q_2$. But there is only one possible entry to the right of $p_3'$ and smaller than $p_2'$, so we have a contradiction.

    Suppose instead that $\psi(p)$ has a $54312$ pattern $q=  q_1q_2q_3q_4q_5$.

    Again, $q$ must contain both of $p_2'$ and $p_3'$. The only way for $q$ to contain $p_2'$ and $p_3'$ is as $q_4q_5$. So $q_1q_2q_3$ are all either in A1 or possibly are $p_1$. In all cases, then $q_1q_2p_3p_2p_4$ forms a $45321$ pattern in $p$, which is a contradiction.

\end{proof}

Now, to construct the map that we want, we will apply $\psi$ iteratively to the class of permutations in question. The properties proven in the Lemma above will show that this new map is the injection we want.

\begin{theorem}
    There is an injection 
    \[
    \Psi: Av_n(3412,4231,45321,54312) \rightarrow Av_n(3412,4321)
    \]
\end{theorem}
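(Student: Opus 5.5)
We define $\Psi$ to mirror the proof of Theorem 3.3. Given $p\in Av_n(3412,4231,45321,54312)$, we apply $\psi$ repeatedly until the permutation avoids $4321$, and set $\Psi(p)$ to be the result.

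\textbf{The hypotheses of Lemma 4.2 hold at every stage.} At the start, $p$ avoids $4231$, so there is no $4231$ pattern at all. The condition that the rightmost $32$ entries of a $4321$ lie to the left of the leftmost $23$ entries of a $4231$ is therefore vacuous. Also, $p$ avoids $3412$, $45321$ and $54312$ by assumption. Now suppose the hypotheses hold for some permutation $r$ in the sequence, and set $r'=\psi(r)$.
\begin{itemize}
\item Properties 1 and 4 give that $r'$ avoids $3412$, $45321$ and $54312$.
\item Property 2 gives that the leftmost $23$ entries of a $4231$ in $r'$ are exactly the images $p_2'p_3'$ of the swapped entries.
\item Property 3 gives that the rightmost $32$ entries of a $4321$ in $r'$ lie strictly to the left of the old rightmost $32$ entries, which occupied the positions of $p_2'p_3'$.
\end{itemize}
Hence the ordering hypothesis holds again for $r'$, and we may apply $\psi$ once more. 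So induction shows that every iterate satisfies the hypotheses of Lemma 4.2.

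\textbf{Termination and image.} Each application of $\psi$ turns the $32$ of a $4321$ into a $23$, moving a larger entry to a later position. So $\psi(r)$ is strictly lexicographically smaller than $r$, and since there are finitely many permutations the process stops. When it stops, the permutation avoids $4321$, and by property 1 it avoids $3412$. Thus $\Psi(p)\in Av_n(3412,4321)$.

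\textbf{Injectivity.} We give an explicit left inverse. Starting from $\Psi(p)$, repeatedly take the leftmost $23$ entries participating in a $4231$ and swap them, i.e. apply $\phi$, until the permutation avoids $4231$. By property 2 of Lemma 4.2, at each stage the leftmost $23$ of a $4231$ in $\psi(r)$ is exactly the pair $\psi$ just created. So $\phi(\psi(r))=r$, and each step undoes the previous $\psi$. Unwinding the sequence step by step therefore returns $p$. The unwinding stops exactly at $p$, because $p$ avoids $4231$ while every intermediate permutation $\psi(r)$ contains the $4231$ that $\psi$ just created. It follows that $p$ is determined by $\Psi(p)$, so $\Psi$ is injective.

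The main point needing care is the maintenance of the ordering hypothesis under iteration. It requires combining properties 2 and 3 exactly as above: the new rightmost $4321$-pair sits strictly left of the old pair's positions, and those positions now hold the new leftmost $4231$-pair.
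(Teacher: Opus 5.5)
Your proposal is correct and follows the paper's proof. Both iterate $\psi$, keep the hypotheses of Lemma 4.2 in force at each step, get termination from the strict lexicographic decrease, and recover $p$ by repeatedly swapping the leftmost $23$ of a $4231$, using property 2. Your explicit use of property 2 together with property 3 to re-establish the ordering hypothesis is a more careful rendering of the paper's remark that ``properties 1, 3, and 4'' suffice; property 2 is also needed there.
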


\begin{proof}
    Let $\Psi$ be the map given by applying $\psi$ iteratively to a permutation $p\in Av_n(3412,4231,45321,54312)$ until it avoids $4321$. Properties $1,3,$ and $4$ allow us to apply $\psi$ iteratively and maintain that properties $1,2,3,$ and $4$ are true in the image. Since $\psi$ will always turn a $4321$ pattern into a $4231$ pattern, then $\psi$ will always make a permutation strictly lexicographically smaller, hence $\Psi$ must eventually terminate.

    Notice that $\Psi$ does map into $Av_n(3412,4321)$ by properties $1$ and $3$ of the above lemma.

    To see that $\Psi$ is an injection, note that we can start with $\Psi(p)$ and swap the leftmost entries participating as the $23$ in a $4231$ pattern, and do the same with the new leftmost entries participating as the $23$ in a $4231$ pattern, and repeat until the permutation avoids $4231$. By property $2$ of the above lemma, this will give us back $p$. 
\end{proof}

\section{Conclusion and Further Directions}
Theorems $3.3$ and $4.3$ together give us the following, which is our main result.
\begin{corollary}
For all $n\geq 1$ we have
    \[
    |Av_n(3412,4321)| = |Av_n(3412,4231,45321,54312)|.
    \]
\end{corollary}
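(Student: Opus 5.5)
The corollary follows at once from the two injections already built, with a finiteness argument closing the gap. Fix $n\geq 1$. Theorem 3.3 gives an injection $\Phi: Av_n(3412,4321)\to Av_n(3412,4231,45321,54312)$, and Theorem 4.3 gives an injection $\Psi$ in the reverse direction. Both sets are finite, since each is a subset of the symmetric group $S_n$. An injection from a finite set $X$ into a finite set $Y$ forces $|X|\le |Y|$. Applying this to $\Phi$ and to $\Psi$ gives
\[
|Av_n(3412,4321)|\le |Av_n(3412,4231,45321,54312)| \quad\text{and}\quad |Av_n(3412,4231,45321,54312)|\le |Av_n(3412,4321)|,
\]
so the two cardinalities are equal. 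This is just the finite Cantor--Schröder--Bernstein argument; no explicit bijection is needed to get the count.

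The only check I would make before writing this up is that both injections really land in the stated codomains and are well defined on every input. The hypotheses of Lemmas 3.2 and 4.2 must hold at the start of each iteration. For an input $p\in Av_n(3412,4321)$ this is immediate: $p$ has no $4321$ pattern, so the ordering condition between the rightmost $32$ of a $4321$ and the leftmost $23$ of a $4231$ holds vacuously. It also avoids $45321$ and $54312$, because both of these contain $4321$. Symmetrically, an input to $\Psi$ avoids $4231$, so its ordering hypothesis is vacuous. After that, parts (1), (3) and (4) of the lemmas maintain the hypotheses, as stated in the proofs of Theorems 3.3 and 4.3. With that confirmed, the corollary holds for every $n\ge 1$.

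The main obstacle lies not in this deduction but upstream, in the case analyses of Lemmas 3.2 and 4.2, which we are allowed to assume. As a refinement, I would remark that $\Phi$ and $\Psi$ are in fact mutually inverse. The inversion procedures in the two theorem proofs are exactly iterated $\psi$ and iterated $\phi$, and property (2) of each lemma says each single step undoes the other. This upgrades the equality of cardinalities to an explicit bijection, consistent with the paper's title.
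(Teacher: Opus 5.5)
Your argument is correct and is exactly the paper's: Corollary 5.1 follows from the injections $\Phi$ and $\Psi$ of Theorems 3.3 and 4.3 between the finite sets $Av_n(3412,4321)$ and $Av_n(3412,4231,45321,54312)$. One small precision in your side check: carrying the ordering hypothesis from one iteration to the next needs property (2) together with property (3), not only properties (1), (3) and (4). In the $\Phi$ direction, the new rightmost $32$ of a $4321$ occupies the old leftmost $23$ positions, and the new leftmost $23$ of a $4231$ lies to their right; the $\Psi$ direction is symmetric.
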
 
What is nice about the method we have used is that $3412$ is not used in the main definition of the maps. In \cite{BursteinPantone} it is shown that $|Av_n(4321)| = |Av_n(4231, 5276143)|$ for all $n\geq 1$. The permutation classes in our Corollary $5.1$ are exactly the classes obtained by taking the classes in the paper by Burstein and Pantone and restricting further each side to avoid $3412$. Then it is perhaps a good guess that our map is in fact the restriction of some larger map on these classes, and in fact some preliminary computational evidence supports this guess. This would be interesting, as then perhaps this larger map could be restricted in different ways to obtain new unbalanced Wilf-equivalences.
\begin{question}
Is there a bijection
\[
F: Av_n(4321) \rightarrow Av_n(4231, 5276143)
\]
which acts in the same way as our map $\Phi$ does when restricted to $3412$-avoiders? Further, can this map be restricted in other ways to obtain other Wilf-equivalences?
\end{question}
Question $5.2$ will be addressed soon in a further paper.
\\
Something else worth noting is that there are other conjectured unbalanced Wilf-equivalences where the kind of swapping we have done seems natural. For instance, in \cite{BursteinPantone} is is conjectured for all $n\geq 1$ that $|Av_n(2413)| = |Av_n(2143, 246135)|$. Computational evidence demonstrates that using exactly the same kind of map as in this paper but for these classes (iteratively swapping middle entries in leftmost $2143$ patterns) will not work, but it may be true that some modification of this map will work.
\begin{question}
    Are there any other unbalanced Wilf-equivalences, for instance the conjectured \cite{BursteinPantone} $|Av_n(2413)| = |Av_n(2143, 246135)|$, that can be proven by a similar method to ours?
\end{question}


\end{document}